\providecommand{\U}[1]{\protect\rule{.1in}{.1in}}
\newtheorem{theorem}{Theorem}
\newtheorem{claim}{Claim}
\newtheorem{ob}[theorem]{Observation}
\newtheorem{proposition}[theorem]{Proposition}
\newenvironment{proof}[1][Proof]{\textbf{#1.} }{\ \rule{0.5em}{0.5em}}
\date{}
\begin{document}

\title{On the global offensive alliance in unicycle graphs
}

\author{\textbf{Mohamed Bouzefrane}\\{\small Dept of Mathematics,} \ {\small B.P. 270, University of Blida,
Algeria}\\mohamedbouzefrane@gmail.com, \and
\and
\textbf{Saliha Ouatiki\thanks{The corresponding author.}}\\{\small L'IFORCE laboratory, (U.S.T.H.B)} \& {\footnotesize Dept of Mathematics, University of Boumerdes, Algeria}
\\saliha\_ouatiki@yahoo.fr}
\maketitle

\begin{abstract}
For a graph $G=(V,E)$, a set $S\subseteq V$ is a dominating set if every
vertex in $V-S$ has at least a neighbor in $S$. A dominating set $S$ is a global
offensive alliance if for each vertex $v$ in $V-S$ at least half the vertices
from the closed neighborhood of $v$ are in $S.$ The domination number $\gamma(G)$ is the minimum cardinality of a dominating set of $G$, and the global
offensive alliance number $\gamma_{o}(G)$ is the minimum cardinality of a
global offensive alliance of $G$. We show that if $G$ is a connected unicycle graph
of order $n$ with $l(G)$ leaves and $s(G)$ support vertices then $\gamma_{o}(G)\geq\frac{n-l(G)+s(G)}{3}$.
Moreover, we characterize all extremal unicycle graphs attaining this bound.

\textbf{Keywords:} domination, global offensive alliance, unicycle graph.

\end{abstract}

\section{Introduction}

Let $G=(V,E)$ be a finite and simple graph of order $n$. The \emph{open
neighborhood} of a vertex $v$ is $N(v)=\left\{u\in V/uv\in E\right\}$ and
the \emph{closed neighborhood} of $v$ is $N\left[ v\right]= N(v)\cup\left\{v\right\}$. If $S\subset V$, then $N(S)=\cup_{v\in S} N(v)$ , $N\left[
S\right]= N(S)\cup S$ and the subgraph induced by $S$ in $G$ is denoted
$G\left[ S\right]$. The \emph{degree} of $v$, denoted by $deg_{G}(v),$ is the size
of its open neighborhood. A vertex of degree one is called a \emph{pendent
vertex }or a \emph{leaf} and its neighbor is called a \emph{support vertex}. If $v$ is
a support vertex, then $L_{v}$ will denote the set of the leaves attached at
$v$. We denote the set of leaves of a graph $G$ by $L(G)$ and the set of
support vertices by $S(G)$, and let $\left\vert L(G)\right\vert =l(G)$,
$\left\vert S(G)\right\vert=s(G)$.

For a graph $G=(V,E)$, a set of vertices $S$ is a \emph{dominating set} if every
vertex in $V-S$ has at least a neighbor in $S$. The \emph{domination number}
$\gamma(G)$ is the minimum cardinality of a dominating set of $G$. For a comprehensive treatment of domination in graphs, see the
books of Haynes, Hedetniemi and Slater \cite{4,5}.

In \cite{6} Hedetniemi, Hedetniemi and Kristiansen introduced several types of
alliances in graphs, including the offensive alliance we consider here. A
dominating set $S$ of $G$ is called a \emph{global offensive alliance} if for every
vertex $v\in V-S$, $|N[v]\cap S|\geq |N[v]-S|$. The \emph{global offensive alliance
number} $\gamma_{o}(G)$ is the minimum cardinality of a global offensive
alliance. The entire vertex set is a global offensive alliance for any graph
$G$, so every graph $G$ has a global offensive alliance number. We abbreviate
global offensive alliance as \emph{GOA}. A GOA with minimum cardinality
$\gamma_{o}(G)$ is called $\gamma_{o}(G)$-set.
A graph $G$ is a \emph{unicycle graph} if it owns only one cycle.

Offensive alliances in graphs were be studied in \cite{1,2,3}.
In this paper we give a lower bound on the global offensive alliance number. More
precisely we show that every connected unicycle graph $G$ of order $n$ with $l(G)$
leaves and $s(G)$ support vertices satisfies $\ \gamma_{o}(G)\geq
$\ \ $(n-l(G)+s(G))/3$\ and we characterize all extremal unicycle graphs attaining this lower bound.\ \ \ \ \ \ \ \ \ \ \ \ \

\section{Mains results}

We begin by this following straightforward observation.

\begin{ob}
\label{obs00}If $G$ is a connected graph of order at least three, then there
is a $\gamma_{o}(G)$-set that contains all the support vertices.
\end{ob}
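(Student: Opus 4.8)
The plan is to start from an arbitrary minimum global offensive alliance $S$ (a $\gamma_o(G)$-set) and to transform it, without increasing its cardinality, into one that contains every support vertex. First I would record the crucial local fact about leaves: if $u$ is a leaf with support neighbor $v$, then $N[u]=\{u,v\}$, so the only vertex that can dominate $u$ or help it meet the offensive condition is $v$. Consequently, if a support vertex $v$ fails to lie in $S$, then every leaf $u\in L_v$ must itself belong to $S$; otherwise such a $u\in V-S$ would be undominated, since its sole neighbor $v$ is not in $S$. This is exactly the structural observation that makes a swap possible.

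Next, assuming some support vertex $v\notin S$, I would fix one of its leaves $u\in L_v$ (which, by the previous step, lies in $S$) and consider $S'=(S\setminus\{u\})\cup\{v\}$, so that $|S'|=|S|$. The heart of the argument is to verify that $S'$ is again a global offensive alliance. Domination is immediate: the only vertex that $u$ could dominate besides itself is $v$, and $v$ now belongs to $S'$, while $u$ itself is dominated by $v\in S'$; every other vertex retains whatever dominator it had in $S$.

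For the offensive condition I would compare $N[w]\cap S$ with $N[w]\cap S'$ for each $w\in V-S'$. The only vertex lying in $V-S'$ but not in $V-S$ is $u$, and for it $N[u]\cap S'=\{v\}$ while $N[u]-S'=\{u\}$, so the condition holds with equality. For any other $w\in V-S'$ (so $w\in V-S$ with $w\neq v,u$), the key point is that $u$ is a leaf adjacent only to $v$, hence $u\notin N[w]$; thus deleting $u$ does not change $N[w]\cap S$, while inserting $v$ can only enlarge $N[w]\cap S'$ and shrink $N[w]-S'$. A one-line count then yields $|N[w]\cap S'|\geq|N[w]\cap S|\geq|N[w]-S|\geq|N[w]-S'|$, so the inequality is preserved.

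Finally, I would iterate. Since $G$ is connected of order at least three, no leaf is a support vertex (a leaf whose unique neighbor were also a leaf would force $G=K_2$), so the removed vertex $u$ is never a support vertex while the inserted vertex $v$ always is. Hence each swap strictly increases the number of support vertices contained in the alliance without changing its size, and as there are only finitely many support vertices the process terminates at a $\gamma_o(G)$-set containing $S(G)$. The step I expect to demand the most care is checking the offensive inequality after the swap, in particular confirming that removing the leaf $u$ harms no vertex's condition; but the simple structure $N[u]=\{u,v\}$ keeps this bookkeeping clean.
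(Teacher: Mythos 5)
Your proof is correct. The paper states this observation without proof (calling it "straightforward"), and your exchange argument — swapping a leaf of a missing support vertex $v$ for $v$ itself, verifying domination and the offensive inequality via $N[u]=\{u,v\}$, and iterating since no leaf is a support vertex in a connected graph of order at least three — is exactly the standard argument the paper implicitly relies on.
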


Bouzefrane and Chellali \cite{1} gave a lower bound of the global offensive alliance of trees
and characterized all extremal trees attaining this bound by considering a family $\mathcal{F}$
of trees of order at least three that can be obtained from $r$ disjoint stars by first adding $r-1$
edges so that they are incident only with centers of the stars and the resulting graph is connected,
and then subdividing each new edge exactly once. They prove the following result.

\begin{theorem}
[\cite{1}]\label{thm1} Let $T$ be a tree of order $n\geq3$ with $l(T)$ leaves and
$s(T)$ support vertices. Then
\[
\gamma_{o}(T)\geq\frac{n-l(T)+s(T)+1}{3},
\]
with equality if and only if $T\in\mathcal{F}.$
\end{theorem}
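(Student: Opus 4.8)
The plan is to argue directly with a minimum GOA rather than by induction. First I would invoke Observation~\ref{obs00} to fix a $\gamma_{o}(T)$-set $A$ containing every support vertex, so that $S(T)\subseteq A$ and $|A|=\gamma_{o}(T)$. Write $L$ for the set of leaves, $I=V\setminus L$ for the internal (non-leaf) vertices, and set $B=I\setminus A$, the internal vertices missed by $A$. Every $b\in B$ is internal and is not a support (supports lie in $A$), so it has degree at least two and no leaf neighbour; the defining inequality $|N[b]\cap A|\ge|N[b]\setminus A|$ then forces each $b\in B$ to have at least two neighbours in $A$.

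The key step is to bound $|B|$. I would pass to the subgraph $T'=T[A\cup B]$. Since the internal vertices of a tree induce a connected subtree and every leaf of $A$ hangs off a support lying in $A$, the graph $T'$ is connected, hence a tree on $|A|+|B|$ vertices with exactly $|A|+|B|-1$ edges. Counting these edges by type (inside $A$, between $A$ and $B$, inside $B$) and using that each $b\in B$ sends at least two edges into $A$, I obtain $|A|+|B|-1\ge e(A,B)\ge 2|B|$, where $e(A,B)$ is the number of $A$--$B$ edges; hence $|B|\le|A|-1$. Translating parameters through $n=|A|+|B|+|L\setminus A|$ and $l=|A\cap L|+|L\setminus A|$ gives $n-l+s=|A|+|B|-|A\cap L|+s$, so the target $3|A|\ge n-l+s+1$ reduces to $2|A|+|A\cap L|\ge|B|+s+1$. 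This follows at once from $|B|\le|A|-1$ together with $s=|S(T)|\le|A|$.

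For the characterisation I would run the chain backwards. Overall equality forces $|A\cap L|=0$, $|B|=|A|-1$, and $s=|A|$ simultaneously. The relation $s=|A|$ with $S(T)\subseteq A$ yields $A=S(T)$, so every internal non-support vertex lies in $B$. Tightness in $|A|+|B|-1\ge e(A,B)\ge 2|B|$ forces $A$ and $B$ each to be independent in $T'$ and every $b\in B$ to have degree exactly two with both neighbours in $A$; as $B$-vertices have no leaf neighbours these are genuine degree-two subdivision vertices joining two supports. Thus $T$ is built from stars centred at the supports, pairwise joined only through single subdivision vertices, which is exactly the description of $\mathcal{F}$. The reverse inclusion I would verify directly: for $T\in\mathcal{F}$ with $r$ stars the set of centres is a GOA of size $r$, and the formula evaluates to $(n-l+s+1)/3=r$, giving equality.

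I expect the main obstacle to be the equality analysis rather than the bound: one must ensure that tightness in the edge count really pins down the global shape of $T$ (no stray high-degree internal vertices, no two adjacent supports, no subdivision vertex incident with a leaf), and one must separately dispose of the low-order exceptional trees such as $P_{3}$, $P_{4}$, and stars, where $T'$ degenerates. A secondary point needing care is the connectivity of $T'=T[A\cup B]$; this rests on the fact that the internal vertices of any tree span a subtree, which I would isolate as a small lemma before carrying out the count.
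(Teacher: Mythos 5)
Your argument is correct, and it is a genuinely different route from the one behind this statement: the paper itself does not prove Theorem~\ref{thm1} at all (it is imported from \cite{1}), and the proof there --- like the proof of Theorem~\ref{thm2} in this paper, which is modelled on it --- proceeds by induction on $n$ with a case analysis near a leaf, deleting or contracting edges and tracking how $n$, $l$ and $s$ change. You instead fix a $\gamma_o$-set $A\supseteq S(T)$, observe that each vertex $b$ of $B=I\setminus A$ satisfies $2|N(b)\cap A|\ge \deg(b)+1\ge 3$ and hence sends at least two edges into $A$, and then count edges of the subtree $T[A\cup B]$ (connected because the non-leaves of a tree span a subtree and any leaf in $A$ hangs off its support in $A$) to get $2|B|\le e(A,B)\le |A|+|B|-1$, i.e.\ $|B|\le|A|-1$; combined with $s\le|A|$ and the bookkeeping $n-l+s=|A|+|B|-|A\cap L|+s$ this gives the bound, and forcing equality throughout ($A=S(T)$, $A\cap L=\emptyset$, all edges of $T[I]$ between $A$ and $B$, each $b\in B$ of degree exactly two) pins $T$ down as stars on the supports joined by single subdivision vertices, which is exactly $\mathcal{F}$; the reverse direction is the easy computation you indicate. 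The degenerate cases you worry about ($P_3$, $P_4$, stars) are in fact absorbed by the general count, since the argument never needs $B\neq\emptyset$. What your approach buys is a short, self-contained, non-inductive proof in which the extremal characterisation drops out of tightness in two inequalities; what the inductive edge-deletion approach buys is the template that this paper then reuses almost verbatim to handle the unicyclic case, where your global edge count would have to be adjusted for the extra edge of the cycle.
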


In the next theorem, we give a lower bound on the global offensive alliance of a connected unicycle graph.

\begin{theorem}\label{thm2}
Let $G$ be a connected unicycle graph of order $n$ with $l(G)$ leaves and $s(G)$ support vertices. Then $\gamma_{o}(G)\geq\frac{n-l(G)+s(G)}{3}$.
\end{theorem}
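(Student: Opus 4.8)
The plan is to reduce the unicycle case to the known tree bound of Theorem~\ref{thm1} by removing a single carefully chosen edge of the cycle, so that the resulting graph becomes a tree (or a forest) whose parameters I can control. Let $C$ denote the unique cycle of $G$. First I would pick an edge $e=xy$ on $C$ and set $T=G-e$, which is a tree since deleting one edge from the unique cycle destroys all cycles while keeping the graph connected. The strategy is to compare the four relevant quantities---$\gamma_o$, order $n$, number of leaves $l$, and number of support vertices $s$---between $G$ and $T$, apply $\gamma_o(T)\geq (n(T)-l(T)+s(T)+1)/3$, and then argue that the passage from $T$ back to $G$ can only change the right-hand side in a controlled way, so that the weaker bound $(n-l(G)+s(G))/3$ survives (losing the $+1$ is exactly the slack we are allowed, which makes this approach plausible).

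The key steps, in order, are as follows. First, by Observation~\ref{obs00} I may fix a $\gamma_o(G)$-set $S$ containing all support vertices of $G$, so that $|S|=\gamma_o(G)$. Second, I would relate $\gamma_o(G)$ to $\gamma_o(T)$: adding the edge $e$ back to $T$ can only increase closed-neighborhood sizes at $x$ and $y$, so a GOA of $G$ need not be a GOA of $T$ and vice versa; I expect to show $\gamma_o(T)\leq \gamma_o(G)+c$ for a small constant $c$ (likely $c\in\{0,1\}$) by repairing $S$ locally near $x$ and $y$. Third, I would track how $n$, $l$, and $s$ move: the order is unchanged, and the numbers of leaves and support vertices can differ by a bounded amount depending on the degrees of $x$ and $y$ on the cycle (both have degree at least two in $G$, so neither is a leaf of $G$, though one of them could become a leaf in $T$ if its only other neighbor is off the cycle). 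Chaining these comparisons through the tree inequality should yield the claimed bound.

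The main obstacle I anticipate is the bookkeeping in the third step: deleting $e$ can simultaneously create new leaves, destroy support vertices, or convert a support vertex into a leaf, and each of these shifts $l(T)-s(T)$ relative to $l(G)-s(G)$ in a way that must be offset against both the lost $+1$ in the numerator and any increase in $\gamma_o$. The delicate part is choosing $e$ so that these perturbations are favorable---for instance, selecting an edge of $C$ incident to a cycle vertex of degree exactly two, so that deletion creates at most a predictable number of new leaves. I would handle the worst cases by a short case analysis on the degrees (within $G$) of the two endpoints of the chosen cycle edge, checking that in every case the inequality $\gamma_o(G)\geq (n-l(G)+s(G))/3$ follows from the tree bound applied to $T$. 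A cleaner alternative, which I would pursue in parallel, is to avoid edge deletion and instead argue directly by a discharging or counting argument: assign to each vertex of $S$ a charge and show that $S$ can ``cover'' at most three units of the quantity $n-l(G)+s(G)$ per vertex, using the cycle structure to absorb the single unit of slack that the tree version gains from its acyclicity.
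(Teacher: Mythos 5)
Your high-level plan (reduce to the tree bound of Theorem~\ref{thm1} by deleting an edge and tracking $n$, $l$, $s$) is the same engine the paper uses, but the central simplifying assumption --- that one can always choose a single edge \emph{of the cycle} whose deletion leaves a tree on which the fixed minimum GOA $D$ still works, up to a cheap local repair --- fails in an identifiable configuration, and this is exactly where the paper is forced to do something else. Fix $D$ containing all support vertices (Observation~\ref{obs00}). Deleting a cycle edge $xy$ is harmless when both endpoints are in $D$ (no condition to check) or both are outside $D$ (each loses a non-$D$ neighbour), and your bookkeeping closes there. The problem is when $D$ alternates around the cycle, so that \emph{every} cycle edge has exactly one endpoint in $D$: if $y\notin D$ has degree at least $3$, an off-cycle neighbour $w\notin D$, and the alliance inequality is tight at $y$ (e.g.\ a $C_4$ with the two non-$D$ vertices each carrying such a pendant structure), then deleting either cycle edge at $y$ strictly violates the condition at $y$, and no repair of cost $0$ is available; a repair of cost $1$ does not suffice, since $\gamma_o(T)\leq\gamma_o(G)+1$ only yields $\gamma_o(G)\geq\frac{n-l(T)+s(T)-2}{3}$, and you cannot gain the needed $2$ units in $s-l$ from one edge deletion. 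The paper escapes by deleting the \emph{non-cycle} edge $yw$, which splits $G$ into a smaller unicyclic graph plus a tree, and by running the whole proof as an induction on $n$ so that the unicyclic component can be handled by the induction hypothesis (it also uses a contraction of two cycle vertices in another subcase, again appealing to induction). Your proposal never mentions induction and explicitly restricts to deletions producing a tree or forest, so it cannot close this case.

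There are also smaller gaps in the bookkeeping you defer: when both endpoints of the chosen edge lie in $D$ and have degree $2$, direct deletion can give $l(T)=l(G)+2$ with $s(T)=s(G)$, which undershoots the bound; one must first argue (as the paper does via a swap $D'=(D-\{x\})\cup\{x'\}$, or via minimality of $D$) that this configuration can be avoided. None of these obstacles is fatal to the general strategy, but as written the proposal is a plan with the hardest case unresolved rather than a proof.
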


\bigskip

\begin{proof} Let $G$ be a unicycle graph of order $n$ and
cycle $C$. It's clear that $n\geq3.$ We proceed by induction on the order of
$G$. It is easy to check that the result is valid for $n=3$ or $4$. We suppose
that every unicycle graph $G^{\prime}$ of order $n^{\prime}< n$ with $l^{\prime}$
leaves and $s^{\prime}$ support vertices satisfies $\gamma_{o}(G^{\prime})\geq\frac{n^{\prime}-l^{\prime}+s^{\prime}}{3}.$ Let
$D$ be a $\gamma_{o}(G)$-set. By Observation \ref{obs00}, we can assume that
$D$ contains all support vertices of $G$. If $G$ is a cycle\ $C_{n}$, then the
statement is true because $\gamma_{o}(C_{n})=\left\lceil \frac{n}%
{2}\right\rceil >\frac{n}{3}.$ \ Assume now that $G\neq C_{n}$. Let $x,y,z$ be
three consecutive vertices on $C$ in this order. Let us now examine the
following cases.

\textbf{Case 1. }$x\in D$ and $y\notin D.$ Then $y$ is not a support vertex.

\textbf{Case 1.1. }$\left\vert V(C)\right\vert =3.$

If $z\notin D$, then $y,z$ have degree at least $3$, otherwise $D$ is not a
\emph{GOA} of $G$. In this case, $D$ is also a \emph{GOA} of $H=G-yz$ implying
that $\gamma_{o}(G)\geq\gamma_{o}(H)$. Since $H$ is a tree of order $n(H)=n(G)=n$ with $l(H)=l(G)$ and
$s(H)=s(G),$ it follows by the Theorem \ref{thm1} that
\begin{align*}
\gamma_{o}(G) \geq\gamma_{o}(H)&  \geq\frac{n(H)-l(H)+s(H)+1}{3}\\
&  =\frac{n-l(G)+s(G)+1}{3}\\
&  >\frac{n-l(G)+s(G)}{3}.
\end{align*}

Suppose now $z\in D$ and assume first that $d(x)=d(z)=2.$ Therefore
$d(y)\geq3$ because $G$ is not a cycle. Let $D^{\prime}=(D-\{z\})\cup\{y\}.$ It easy to see that $D^{\prime}$ is \emph{GOA} of $H=G-yz$
and so $\gamma_{o}(G)\geq\gamma_{o}(H)$. Also, it is not difficult to verify
that $n(H)=n(G)=n$, $l(H)=l(G)+1$ and $s(H)=s(G)+1.$ As $H$ is a tree, Theorem
\ref{thm1} implies that
\begin{align*}
\gamma_{o}(H)  &  \geq\frac{n(H)-l(H)+s(H)+1}{3}\\
&  =\frac{n-l(G)-1+s(G)+1+1}{3}=\frac{n-l(G)+s(G)+1}{3},
\end{align*}
which gives $\gamma_{o}(G)>\frac{n-l(G)+s(G)}{3}.$

Assume now that one of $x,z$ say $x$ has degree at least $3$ and let $H=G-xz$.
If $d_{G}(z)=2$, the set $D-\{z\}\cup \{y\}$ is a \emph{GOA} of $H$ then $n(H)=n(G)=n$,
$l(H)=l(G)+1$ and $s(H)=s(G)+1.$ So as in the previous case, $\gamma_{o}%
(G)>\frac{n-l(G)+s(G)}{3}.$ If $d_{G}(z)\geq3$, the set $D$ still a \emph{GOA} of $H$, then $n(H)=n(G)=n$,
$l(H)=l(G)$ and $s(H)=s(G).$ So, by Theorem \ref{thm1}
\begin{align*}
\gamma_{o}(G)\geq \gamma_{o}(H)  &  \geq\frac{n(H)-l(H)+s(H)+1}{3}\\
&  =\frac{n-l(G)+s(G)+1}{3}\\
&  >\frac{n-l(G)+s(G)}{3}.
\end{align*}

\textbf{Case 1.2}. $\left\vert V(C)\right\vert \geq4.$ If $d(y)=2$, then $z\in
D$. Otherwise $D$ is not a \emph{GOA}. Removing $y$ and identify $x$ and $z$ in
one vertex noted $\overline{xz}$. Then $G^{\prime}$ is a unicycle graph of order $n^{\prime}=n-2, l^{\prime}=l(G), s^{\prime}\geq s(G)-1.$
On the other hand $(D-\left\{x,z\right\})\cup\left\{\overline
{xz}\right\}  $ is a \emph{GOA} of $G^{\prime}$. We use induction on
$G^{\prime}$, we obtain $\gamma_{o}(G)\geq\gamma_{o}(G^{\prime})+1\geq
\frac{n^{\prime}-\ell^{\prime}+s^{\prime}}{3}+1\geq\frac{n-2-l(G)+s(G)-1}{3}+1$
and so $\gamma_{o}(G)\geq\frac{n-l(G)+s(G)}{3}$. \\

Now we assume that $d(y)\geq3$. Then $y$ is not a support vertex else by our
choice of $D$, $y\in D$. Let $w$ be the other neighbor of $y$ such that $w\notin
D$. If such vertex does not exist, then $N(y)\subset D$ and in this case
removing the edge $xy$. The resulting graph $G^{\prime}$ is a tree of order
$n^{\prime}=n$ with $l^{\prime}\leq l(G)+1$, $s^{\prime}\geq s(G)$. As $D$ is a \emph{GOA} of $G^{\prime}$, we obtain by
Theorem \ref{thm1} $\gamma_{o}(G)\geq\gamma_{o}(G^{\prime})\geq\frac{n^{\prime}-\ell^{\prime}+s^{\prime}+1}%
{3}\geq\frac{n-\ell(G)+s(G)}{3}$. So we can assume that $w$ exist. In this case
removing the edge $yw$ and note $G^{\prime}$ the resulting graph. If $w=z$, as $z \notin D, y \notin D$ then $deg_{G}(z)\geq 3$. The set
$D$ still a \emph{GOA} of $G^{\prime}$ which is a tree of order $n^{\prime}=n$
with $l^{\prime} = l(G)$, $s^{\prime} = s(G)$. We use Theorem \ref{thm1} on
$G^{\prime}$ we obtain $\gamma_{o}(G)> \frac{n-\ell(G)+s(G)}{3}$.
Now assume that $w\neq z$ . So $z\in D.$ The graph $G^{\prime}$ consists of two connected components, one is a unicycle graph $G^{\prime\prime}$ and the other is a tree $T^{^{\prime}}$ containing $w$. Note that $T^{\prime}$ is of order $n^{\prime}\geq5$ as both $y,w$ are not in $D$, so $w$ has at least two neighbors in $D\cap V(T^{\prime})$. On the other hand $D\cap V(G^{\prime\prime})$ and $D\cap V(T^{\prime})$ are both a global offensive alliances of $G^{\prime\prime}$ and $T^{\prime}$ respectively, hence it follows that $\gamma_{o}(G)\geq\gamma
_{o}(G^{^{\prime}})=\gamma_{o}(G^{\prime\prime})+\gamma_{o}(T^{\prime})$.

Set $n(G^{\prime\prime})=n^{\prime\prime}, l(G^{\prime\prime})=l^{\prime\prime}, s(G^{\prime\prime})=s^{\prime\prime}, l(T^{\prime})=l^{\prime}$ and $s(T^{\prime})=s^{\prime}$.

Note also that since $d(y)\geq3$ and $T^{\prime}$ is a tree of order $n^{\prime}\geq 5,$ then $n=n^{\prime\prime}+n^{\prime}, l(G)=l^{^{\prime\prime}}+l^{\prime}$ and $s(G)= s^{\prime\prime}+s^{\prime}$. Now we use induction on $G^{\prime\prime}$ and by Theorem \ref{thm1} \ on $T^{\prime}$ we obtain
$\gamma_{o}(G)\geq \gamma_{o}(G^{\prime\prime})+\gamma_{o}(T^{\prime})\geq \frac{n^{\prime\prime}-l^{\prime\prime}+s^{\prime\prime}}{3}+\frac{n^{\prime}-l^{\prime}+s^{\prime}+1}{3}
=\frac{n-l(G)+s(G)+1}{3} >\frac{n-l(G)+s(G)}{3}$.

\textbf{Case 2. }$x,y\in D\mathbf{\ .}$ If $d(x)=2$, then we can replace $x$
in $D$ by his second neighbor on $C$. In this case we obtain a new
$\gamma_{o}(G)-$set $D^{\prime}$ such that $x\notin D^{\prime}$, $y\in D^{\prime}$ or this case has been considered (see case $1$). So $d(x)\geq3$
and similarly $d(y)\geq3.$ Let $G^{\prime}$ be the obtained graph after
removing the edge $xy.$ Then $G^{\prime}$ is a tree of order $n^{\prime}$ with
$l^{\prime}=l(G),s^{\prime}=s(G)$. Since $G^{\prime}$ is a tree and $D$ is a
\emph{GOA} of $G^{\prime}$ so we obtain by Theorem \ref{thm1}, $\gamma_{o}(G)>\frac{n-\ell(G)+s(G)}{3}$.

\textbf{Case 3. }$x\notin D,$\textbf{\ }$y\notin D\mathbf{.}$ Then both
$x,y$ must be of degree at least $3$, otherwise $D$ is not a \emph{GOA}. Furthermore, $x,y$ are not a support vertices by the choice of $D$.
Let $H$ be the graph obtained from $G$ by removing the edge $xy$. Clearly, $H$ is a tree of order $n(H)=n$ with $l(H)= l(G)$ and $s(H)=
s(G)$. As $D$ is a \emph{GOA} of $H$, it follows by Theorem \ref{thm1} that $\gamma_{o}(G)>\frac{n-l(G)+s(G)}{3}$. \end{proof}
\bigskip

In order to characterize all unicycle graphs which attain the bound of the Theorem \ref{thm2}, we need the next theorems which give properties of all vertices of a cycle of these graphs. We start with the next one which state that if the lower bound of the Theorem \ref{thm2} is reached, then the unicycle graph is a bipartite one. Chellali \cite{2} proved for a bipartite graph $G$ of order $n$ without isolated vertices that $\gamma_{o}(G)\leq \frac{n-l(G)+s(G)}{2}$. The next result still verify this inequality.

\begin{theorem}\label{them3}
Let $G$ be a connected unicycle graph of order $n$ with cycle $C$, $l(G)$ leaves and $s(G)$ support vertices. If $\gamma_{o}(G)=\frac{n-l(G)+s(G)}{3}$, then $C$ is an even cycle.
\end{theorem}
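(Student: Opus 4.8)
The plan is to prove the contrapositive: if the cycle $C$ of $G$ is \emph{odd}, then $\gamma_{o}(G)>\frac{n-l(G)+s(G)}{3}$ strictly. Since Theorem~\ref{thm2} already supplies the weak inequality, this shows that equality can hold only when $C$ is even. I would argue by strong induction on $n$, reusing verbatim the case analysis of the proof of Theorem~\ref{thm2}: fix a $\gamma_{o}(G)$-set $D$ containing all support vertices (Observation~\ref{obs00}), choose three consecutive cycle vertices $x,y,z$ (choosing the triple so as to avoid the configuration $x\notin D,\,y\in D$), and run through the same cases. The base cases are the smallest odd unicycle graphs, $C_{3}$ and $C_{3}$ with one pendant, where $\gamma_{o}=2>\frac{n-l+s}{3}$ is checked directly, exactly as $n=3,4$ were checked in Theorem~\ref{thm2}.

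The first point is that almost every case of the proof of Theorem~\ref{thm2} already delivers the \emph{strict} inequality: the case $G=C_{n}$ (as $\gamma_{o}(C_{n})=\lceil n/2\rceil>n/3$), all of Case 1.1, Case 2 with $d(x),d(y)\geq3$, Case 3, and the subcases of Case 1.2 with $d(y)\geq3$ in which $y$ has a neighbor \emph{outside} $D$. In each of these the reduction passes to a tree and invokes Theorem~\ref{thm1}, whose extra $+1$ is precisely what upgrades the bound to strict. Hence these cases need nothing new. The heart of the matter is the two places where Theorem~\ref{thm2} yielded only $\geq$: (i) Case 1.2 with $d(y)=2$, and (ii) Case 1.2 with $d(y)\geq3$ and $N(y)\subseteq D$.

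For (i) the reduction deletes $y$ and identifies $x$ and $z$; since here $|V(C)|\geq5$, the new cycle has length $|V(C)|-2$ and is therefore \emph{still odd}, so the induction hypothesis gives $\gamma_{o}(G')>\frac{n'-l'+s'}{3}$ strictly, and the inequality $\gamma_{o}(G)\geq\gamma_{o}(G')+1$ (together with $n'=n-2,\ l'=l,\ s'\geq s-1$) propagates strictness to $G$. Case (ii) is the one that must be treated differently from Theorem~\ref{thm2}, where deleting the cycle edge $xy$ destroys $C$ and gives only $\geq$. Instead, since $d(y)\geq3$ the vertex $y$ has an off-cycle neighbor $u$, and $yu$ is a bridge of the unicycle graph. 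Deleting $yu$ splits $G$ into an odd unicycle graph $G''$ still containing $C$ and a tree $T$; one checks that $D\cap V(G'')$ and $D\cap V(T)$ remain global offensive alliances (the condition at $y$ survives because $d(y)\geq3$, so $y$ keeps at least two neighbors in $D$), whence $\gamma_{o}(G)\geq\gamma_{o}(G'')+\gamma_{o}(T)$. Applying the induction hypothesis to $G''$ and Theorem~\ref{thm1} to $T$ again forces strictness. This is in fact the same bridge-deletion device already used in Theorem~\ref{thm2} for the subcase $w\neq z$.

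The main obstacle I anticipate is the leaf/support bookkeeping across the bridge split in case (ii): one must verify $(l''+l_{T})-(s''+s_{T})\leq l(G)-s(G)+1$, separating the situation $d_{G}(u)=2$ (where $u$ becomes a new leaf in $T$ and its other neighbor may become a new support) from $d_{G}(u)\geq3$, and treating by hand the degenerate possibility $T=K_{2}$, to which Theorem~\ref{thm1} does not apply although the formula still holds. A secondary nuisance is the rerouting in Case 2 when $d(x)=2$: replacing $x$ by its other cycle neighbor returns us to Case 1, all of whose subcases have already been shown to be strict (directly or by induction). Conceptually the argument rests on a clean dichotomy: a cycle-destroying reduction always gains the strict $+1$ of Theorem~\ref{thm1}, whereas every cycle-preserving reduction changes $|V(C)|$ by $0$ or $2$ and so can never turn an odd cycle even; morally, equality would force the cycle vertices to alternate between $D$ and $V-D$, which an odd cycle cannot do.
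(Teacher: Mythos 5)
Your proposal is correct in outline, and the details you flag (the leaf/support bookkeeping across the bridge split, the $T=K_2$ degeneracy, the rerouting of Case 2 when $d(x)=2$) do all check out, but it is a genuinely different proof from the paper's. The paper does not run an induction over odd unicycle graphs and never enters Case 1 of the proof of Theorem \ref{thm2} at all, so the two subcases you identify as ``the heart of the matter'' simply do not arise there. Instead it argues by contradiction directly on the trace of $D$ on the cycle: the strictness of Case 3 forbids a cycle edge with both ends outside $D$; the strictness of Case 2 (both degrees $\geq 3$) together with one new reduction --- contracting a cycle edge $xy$ with $x,y\in D$ and $\deg_{G}(x)=2$ into a single vertex, which drops $n$ by $1$ and $|D|$ by $1$ and already contradicts the \emph{weak} bound of Theorem \ref{thm2} applied to the contracted graph, no parity preservation or fresh induction needed --- forbids a cycle edge with both ends inside $D$; hence every cycle edge has exactly one end in $D$, which is impossible on an odd cycle. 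That parity punchline, which you mention only as a ``moral'' in your final sentence, is the paper's entire proof. What the paper's route buys is brevity and locality: only adjacent pairs on $C$ are ever examined, and the single contraction plus Theorem \ref{thm2} does all the work. What your route buys is a uniform strict inequality for odd unicycle graphs obtained by the same inductive machine as Theorem \ref{thm2}, at the cost of re-verifying every case and supplying two new reductions (the oddness-preserving contraction in Case 1.2 with $d(y)=2$, and the bridge deletion at a degree-$\geq 3$ cycle vertex with $N(y)\subseteq D$), whose bookkeeping you would indeed have to carry out in full.
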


\begin{proof}
By the absurd: we assume that $G$ is an unicycle graph with an odd cycle $C$ such that $\gamma_{o}(G)=\frac{n-l(G)+s(G)}{3}$.
We have $G\neq C_{n}$ as $\gamma_{o}(C_{n})=\lceil \frac{n}{2}\rceil=\frac{n+1}{2}\neq \frac{n}{3}$. So, $G$ has at least one support vertex. Let $D$ be a $\gamma_{o}(G)$-set and
let $x,y$ be two adjacent vertices of $C$. It follows by Case $3$ of the proof of Theorem \ref{thm2} that at least one of $x, y$ is in $D$. Thus, let us consider the case $x,y\in D$. The Case $2$ of the proof of the previous theorem excluded the case where both $x,y$ are of degree at least $3$. So, assume that $deg_{G}(x)=2$.
If $|V(C)|=3$, then Case 1.1 of the proof of Theorem \ref{thm2} leads to $\gamma_{o}(G)>\frac{n-l(G)+s(G)}{3}$. So assume that $|V(C)|\geq 5$ as $C$ is an odd cycle. Let us identify the vertices $x$ and $y$ on the vertex $\overline{xy}$. The obtained graph $G^{\prime}=(G-\{x,y\})\cup \{\overline{xy}\}$ is an unicycle graph of order $n^{\prime}=n-1$. Since $deg_{G}(x)=2$ then $l(G^{\prime})=l^{\prime}=l(G)$ and $s(G^{\prime})= s^{\prime}=s(G)$. The set $(D-\{x,y\})\cup \{\overline{xy}\}$ is a global offensive alliance of $G^{\prime}$, which implies that $\gamma_{o}(G^{\prime})\leq |D|-1= \gamma_{o}(G)-1= \frac{n-l(G)+s(G)}{3}-1=\frac{n-l(G)+s(G)-3}{3}$.
On the other hand, Theorem \ref{thm2} yields $\gamma_{o}(G^{\prime})\geq \frac{n^{\prime}-l^{\prime}+s^{\prime}}{3}= \frac{n-l(G)+s(G)-1}{3}$. Thus we obtain $\frac{n-l(G)+s(G)-1}{3}\leq \gamma_{o}(G^{\prime})\leq \frac{n-l(G)+s(G)-3}{3}$, which is a contradiction.

We deduce then that for every edge $xy$ of $C$, only one extremity $x$ or $y$ is in $D$ and so either $|V(C)\cap D|= \frac{|V(C)|+1}{2}$ in which case there exists an edge $xy$ such that $x, y\in D$ which is a contradiction, or $|V(C)\cap D|= \frac{|V(C)|-1}{2}$ in which case there exists an edge $xy$ such that both $x,y$ are not in $D$, which is also a contradiction.\end{proof}

\begin{theorem}\label{thm3} Let $G$ be a connected unicycle graph of order $n$ with cycle $C$, $l(G)$ leaves and $s(G)$ support vertices such that $\gamma_{o}(G)=\frac{n-l(G)+s(G)}{3}$. Then for every $\gamma_{o}(G)$-set $D$, every vertex of $C-D$ has degree two.
\end{theorem}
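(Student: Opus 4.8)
The plan is to argue by contradiction. Suppose some $\gamma_{o}(G)$-set $D$ contains a vertex $v\in V(C)$ with $v\notin D$ and $\deg_{G}(v)\geq 3$. The first step is to record the structure that the equality hypothesis forces on the cycle. Exactly as in the proof of Theorem \ref{them3} (through Cases $2$ and $3$ of Theorem \ref{thm2}), in the equality case every edge of $C$ has exactly one endpoint in $D$: an edge with both endpoints outside $D$ falls under Case $3$, while an edge with both endpoints inside $D$ falls under Case $2$ or the identification argument, and each of these yields the strict inequality $\gamma_{o}(G)>\frac{n-l(G)+s(G)}{3}$. Applying this to the two cycle edges at $v$, both cycle-neighbours $x,z$ of $v$ lie in $D$. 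Since $\deg_{G}(v)\geq 3$, the vertex $v$ also has at least one neighbour off the cycle, and I would split into two cases according to whether $v$ sees a non-$D$ vertex off the cycle.

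In the main case, suppose $v$ has an off-cycle neighbour $w\notin D$. Because the adjacent vertices $v,w$ are both outside $D$, the alliance condition at $w$ forces $w$ to have at least two neighbours in $D$, hence $\deg_{G}(w)\geq 3$. As $vw$ is a bridge (it is off the unique cycle), deleting it leaves a unicycle component $G''$ containing $C$ and a tree component $T'$ containing $w$ of order at least $3$; and since the deleted edge joins two vertices outside $D$, the restrictions $D\cap V(G'')$ and $D\cap V(T')$ are global offensive alliances of $G''$ and $T'$ respectively. Neither $v$ nor $w$ drops to degree one, so no leaf or support vertex is created and $n=n''+n'$, $l(G)=l''+l'$, $s(G)=s''+s'$ in the obvious notation. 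Applying Theorem \ref{thm2} to $G''$ and Theorem \ref{thm1} to $T'$ and adding, the extra $+1$ coming from the tree bound gives $\gamma_{o}(G)\geq\gamma_{o}(G'')+\gamma_{o}(T')\geq\frac{n-l(G)+s(G)+1}{3}$, contradicting equality.

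In the remaining case every off-cycle neighbour of $v$ lies in $D$, so $N(v)\subseteq D$. Here I would delete a cycle edge at $v$, say $vx$, obtaining a tree $H$ of the same order $n$. Since $N(v)\subseteq D$ and $\deg_{G}(v)\geq 3$, after the deletion $v$ still has at least two neighbours in $D$ while only $v$ itself lies outside $D$ in $N_{H}[v]$, so $D$ is still a global offensive alliance of $H$. When $\deg_{G}(x)\geq 3$ no new leaf appears, $l(H)=l(G)$ and $s(H)=s(G)$, and Theorem \ref{thm1} again produces the $+1$ and the contradiction. When $\deg_{G}(x)=2$ the deletion turns $x$ into a leaf; one then checks that either the other cycle-neighbour of $x$ becomes a new support vertex (so that $s(H)=s(G)+1$ compensates the new leaf), or, failing that, the symmetric deletion of $vz$ closes the bound, or, in the degenerate situation where $v$ carries a leaf $w\in D$, that leaf can be split off and the inequality recovered by a direct count on $G-w$.

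The step I expect to be the main obstacle is not the first case, which is clean, but the accounting of leaves and support vertices under edge deletion in the second case. The alliance inequality at $v$ can be exactly tight, so one must delete toward a neighbour whose removal does not destroy the alliance; and the creation of a new leaf that is \emph{not} accompanied by a new support vertex is precisely the configuration that threatens to collapse the strict inequality back to equality. Disposing of that configuration — by switching to the other cycle edge, by exhibiting the forced new support vertex, or by a direct computation in the small-tree degenerate cases — is the delicate heart of the argument.
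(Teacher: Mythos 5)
Your proposal follows essentially the same route as the paper: both cycle-neighbours of the offending vertex are forced into $D$, an off-cycle neighbour outside $D$ yields the unicycle-plus-tree decomposition whose tree part contributes the $+1$ via Theorem \ref{thm1}, and otherwise a cycle edge at the offending vertex is deleted and Theorem \ref{thm1} is applied to the resulting tree. The one subcase you leave hedged ($\deg_{G}(x)=2$) closes by the first of your three alternatives, using a fact you have already derived: since every edge of $C$ has exactly one endpoint in $D$ and $x\in D$, the other cycle-neighbour $x^{\prime}$ of $x$ satisfies $x^{\prime}\notin D$, hence (with $D$ taken to contain all support vertices, as in Observation \ref{obs00}) $x^{\prime}$ is not a support vertex of $G$ and becomes one in $H$, so $l(H)=l(G)+1$ and $s(H)=s(G)+1$ cancel and Theorem \ref{thm1} gives $\gamma_{o}(H)\geq\frac{n-l(G)+s(G)+1}{3}$ while $D$ (or $(D-\{x\})\cup\{x^{\prime}\}$, as the paper prefers) is a \emph{GOA} of $H$ of cardinality $\frac{n-l(G)+s(G)}{3}$, a contradiction; neither the symmetric deletion of $vz$ nor the degenerate leaf case is needed.
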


\begin{proof}
Let $D$ be a $\gamma_{o}(G)$-set and let $y$ be a vertex of $C-D$ such that $deg_{G}(y)\geq 3$. Suppose that $x,z$ are the neighbors of $y$ in $C$ and let $x^{\prime}$ be the other neighbor of $x$ in $C$. By the proof of Theorem \ref{them3}, we have both $x,z$ are in $D$ and $x^{\prime}\notin D$. So, we are in the case $1.2$ of the proof of Theorem \ref{thm2}, as $x\in D, y\not\in D$ and $deg_{G}(y)\geq 3$. We know then that if there exists a vertex $w$ neighbor of $y$ such that $w\not\in D$ then $w\neq z$ as $z\in D$. By deleting the edge $wy$, we get $\gamma_{o}(G)>\frac{n-l(G)+s(G)}{3}$ which is a contradiction. Thus, all neighbors of $y$ are in $D$. Let $G^{\prime}=G-\{xy\}$.
If $deg_{G}(x)=2$, $G^{\prime}$ is a tree of order $n(G^{\prime})=n^{\prime}=n$ with $l(G^{\prime})=l^{\prime}=l(G)+1$ and $s(G^{\prime})=s^{\prime}=s(G)+1$. The set $(D-\{x\})\cup \{x^{\prime}\}$ is a \emph{GOA} of $G^{\prime}$ and so $\gamma_{o}(G^{\prime})\leq |D|=\frac{n-l(G)+s(G)}{3}$. On the other hand, by Theorem \ref{thm1}, we have $\gamma_{o}(G^{\prime})\geq \frac{n^{\prime}-l^{\prime}+s^{\prime}+1}{3}=\frac{n-l(G)-1+s(G)+1+1}{3}=\frac{n-l(G)+s(G)+1}{3}$. We obtain then $\frac{n-l(G)+s(G)+1}{3}\leq \gamma_{o}(G^{\prime})\leq \frac{n-l(G)+s(G)}{3}$ which is a contradiction.

Assume now that $deg_{G}(x)\geq 3$. $G^{\prime}$ is a tree of order $n(G^{\prime})=n^{\prime}=n$ with $l(G^{\prime})=l^{\prime}=l(G)$ and $s(G^{\prime})=s^{\prime}=s(G)$. The set $D$ is a \emph{GOA} of $G^{\prime}$ and so $\gamma_{o}(G^{\prime})\leq |D|=\frac{n-l(G)+s(G)}{3}$. On the other hand, by Theorem \ref{thm1}, we have $\gamma_{o}(G^{\prime})\geq \frac{n^{\prime}-l^{\prime}+s^{\prime}+1}{3}=\frac{n-l(G)+s(G)+1}{3}$ which is a contradiction.\end{proof}

\begin{theorem}\label{thm4} Let $G$ be a connected unicycle graph of order $n$ with cycle $C$, $l(G)$ leaves and $s(G)$ support vertices such that $\gamma_{o}(G)=\frac{n-l(G)+s(G)}{3}$. Then for every $\gamma_{o}(G)$-set $D$, every vertex of $C\cap D$ is a support vertex.
\end{theorem}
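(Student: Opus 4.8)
The plan is to argue by contradiction: suppose some $\gamma_o(G)$-set $D$ contains a cycle vertex $v\in C\cap D$ that is \emph{not} a support vertex. First I would assemble the local picture around $v$ from the earlier results. Let $x,z$ be the two neighbours of $v$ on $C$, and let $x'$ (resp. $z'$) be the other neighbour of $x$ (resp. $z$) on $C$. By the proof of Theorem \ref{them3} the vertices of $C$ alternate between $D$ and $V-D$ (each cycle edge has exactly one end in $D$), so $x,z\notin D$ and $x',z'\in D$; by Theorem \ref{thm3} the vertices $x,z$ of $C-D$ both have degree $2$. Since $C$ is even by Theorem \ref{them3}, we have $z'\neq v$.

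The central reduction is to delete a single cycle edge incident with $v$ and compare with Theorem \ref{thm1}. Deleting $vx$ yields a tree $G'=G-vx$ of order $n$; because $v\in D$ and the now-pendant vertex $x$ keeps its $D$-neighbour $x'$, the set $D$ is still a \emph{GOA} of $G'$, so $\gamma_o(G')\le|D|=\frac{n-l(G)+s(G)}{3}$. Tracking the pendant/support bookkeeping, $x$ becomes a new leaf, $x'$ becomes a support exactly when it was not one already, and if $\deg_G(v)=2$ the vertex $v$ also becomes a leaf while $z$ becomes a support; in every case one checks that $n(G')-l(G')+s(G')+1=n-l(G)+s(G)+[\,x'\notin S(G)\,]$. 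Applying Theorem \ref{thm1} to $G'$ then gives $\gamma_o(G')\ge\frac13\big(n-l(G)+s(G)+[\,x'\notin S(G)\,]\big)$, and by symmetry the analogous estimate with $z'$ holds after deleting $vz$ instead.

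From here I would split into cases. If $x'\notin S(G)$ (or symmetrically $z'\notin S(G)$, deleting $vz$), the right-hand side equals $\frac{n-l(G)+s(G)+1}{3}$, which strictly exceeds $|D|=\gamma_o(G)$ and contradicts $\gamma_o(G')\le|D|$. Hence we may assume both $x'$ and $z'$ are support vertices; then the two estimates for $\gamma_o(G')$ coincide, forcing equality in Theorem \ref{thm1} and therefore $G'\in\mathcal{F}$. The key structural fact I would then invoke is that in every tree of $\mathcal{F}$ each edge joins a support vertex to a non-support vertex (each star contributes only center–leaf edges, and each once-subdivided connecting edge contributes center–subdivision edges), so the supports and non-supports form the two sides of a bipartition with no edge inside a side. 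Now $v$ is a non-support of $G'$, since its $G'$-neighbours are $z$ (degree $2$, not a leaf) together with the off-cycle neighbours of $v$ (none a leaf, as $v$ is not a support of $G$); and $z'$ remains a support of $G'$. Looking at the path $z'-z-v$ of $G'$: if $z$ is a support of $G'$ then the edge $zz'$ joins two supports, while if $z$ is not a support of $G'$ then the edge $zv$ joins two non-supports. Either way the bipartition property of $\mathcal{F}$ is violated, yielding the desired contradiction.

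I expect the main obstacle to be precisely the case in which both $x'$ and $z'$ are already support vertices, since there the naive leaf/support count only delivers equality rather than a strict gap; the way around it is to feed that equality into the extremal characterization of Theorem \ref{thm1} and exploit the rigid center/leaf/subdivision structure of $\mathcal{F}$, which forbids the adjacency pattern that the cycle $C$ forces on $G'$. A secondary technical point to handle carefully is the small even cycle $|V(C)|=4$, where $x'=z'$, together with checking that $z'$ genuinely retains a leaf neighbour in $G'$ (which holds because $z'\notin\{v,x\}$ and the deleted edge $vx$ lies on the cycle, not at a pendant of $z'$).
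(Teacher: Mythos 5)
Your proof is correct, and after the opening reduction it takes a genuinely different route from the paper's. Both arguments start the same way: place the non-support vertex $v\in C\cap D$ between its degree-two cycle neighbours, delete a cycle edge at $v$, and compare the resulting tree with Theorem \ref{thm1}. But the paper only runs the ``equality forces $G'\in\mathcal{F}$'' argument in the subcase $\deg_G(v)=2$; for $\deg_G(v)\geq 3$ it abandons the cycle and instead proves three structural claims about the off-cycle neighbours of $v$ (Claims \ref{cl1}--\ref{cl3}) and then deletes the $k=\deg_G(v)-2$ edges $y_iw_i$ to get a component-wise count exceeding the bound by $\tfrac13$. You keep the deletion on the cycle in every case, split on whether the second cycle neighbours $x',z'$ are supports (if not, your leaf/support bookkeeping already yields a strict gap), and in the remaining case invoke the full bipartition property of $\mathcal{F}$ --- every edge joins a support to a non-support --- along the path $z'$--$z$--$v$, rather than only the paper's weaker observation that two supports cannot be adjacent in $\mathcal{F}$. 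This is shorter, handles $\deg_G(v)=2$ and $\deg_G(v)\geq3$ uniformly, and replaces the paper's unjustified ``without loss of generality, the second neighbours are supports'' with an actual case analysis. Two small caveats: like the paper, you cite the alternation of $D$ along $C$ from ``the proof of Theorem \ref{them3},'' which is only carried out there under the hypothesis that $C$ is odd (the identification argument does extend to even cycles of length at least $4$, but that extension deserves a sentence); and the paper's Claims \ref{cl2} and \ref{cl3} are reused in the final characterization theorem, so if your argument replaced the paper's they would still need to be established elsewhere.
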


\begin{proof}
Let $D$ be a $\gamma_{o}(G)$-set and assume that there exists a vertex $x$ of $C\cap D$ which is not a support vertex. Let $y,z$ be the neighbors of $x$ in $C$. By Theorems \ref{them3}, \ref{thm3}, we know that both $y,z$ are not in $D$ and have degree two. So, $y,z$ are not a support vertices. Without loss of generality, we assume that the other neighbors of $z$ and $y$ in $C$ are a support vertices. If $deg_{G}(x)=2$, let us remove the edge $xy$ from $G$, the resulting graph $G^{\prime}$ is tree of order $n(G^{\prime})=n^{\prime}=n$ with $l(G^{\prime})=l^{\prime}=l(G)+2$ and $s(G^{\prime})=s^{\prime}=s(G)+1$. The set $(D-\{x\})\cup \{z\}$ is a \emph{GOA} of $G^{\prime}$ and so $\gamma_{o}(G^{\prime})\leq |D|=\frac{n-l(G)+s(G)}{3}$. On the other hand, Theorem \ref{thm1} yields $\gamma_{o}(G^{\prime})\geq \frac{n^{\prime}-l^{\prime}+s^{\prime}+1}{3}=\frac{n-l(G)-2+s(G)+1+1}{3}=\frac{n-l(G)+s(G)}{3}$. We obtain then $\gamma_{o}(G^{\prime})=\frac{n-l(G)+s(G)}{3}=\frac{n^{\prime}-l^{\prime}+s^{\prime}+1}{3}$. Thus, in view of Theorem \ref{thm1}, $G^{\prime}\in \mathcal{F}$. Or, $G^{\prime}$ has two support vertices $z$ and its neighbor which are adjacent, or this is impossible by the construction of $\mathcal{F}$. So $deg_{G}(x)\geq 3$.

\begin{claim}\label{cl1}
$x$ has no support vertex as a neighbor.
\end{claim}

\begin{proof} Assume that $x$ is adjacent to a support vertex $w$. By the proof of Theorem \ref{them3}, $w\not\in C$. By removing from $G$, the edge $xw$, the resulting graph $G'$ has two connected components, $G_{0}$ which contains a cycle $C$ and the tree $T'$ which contains $w$. $G_{0}$ is a unicycle graph of order $n(G_{0})=n_{0}$ with $l(G_{0})=l_{0}, s(G_{0})=s_{0}$. $T'$ has order $n(T')=n'$ with $l(T')=l', s(T')=s'$. The sets $D\cap V(G_{0}), D\cap V(T')$ are both a \emph{GOA} of $G_{0}$ and $T'$ respectively. We have $n=n_{0}+n', l(G)=l_{0}+l'$ and $s(G)=s_{0}+s'$. We get by the Theorems \ref{thm1}, \ref{thm2}
\begin{align*}
\gamma_{o}(G)  &  \geq \gamma_{o}(G_{0})+\gamma_{o}(T')\\
&  \geq \frac{n_{0}-l_{0}+s_{0}}{3}+\frac{n'-l'+s'+1}{3}\\
&  =\frac{n-l(G)+s(G)+1}{3}.
\end{align*}
Which is a contradiction.\end{proof}

\begin{claim} \label{cl2}
Every neighbor of $x$ in $V(G)-C$  has degree at most $2$.
\end{claim}

\begin{proof}
Let $y$ be a neighbor of $x$ in $V(G)-C$ such that $deg_{G}(y)\geq 3$. By Claim \ref{cl1}, $y$ is not a support vertex. Furthermore, $y\not\in D$, otherwise, by removing the edge $xy$, we proceed with the same manner as in the proof of Claim \ref{cl1} and we get a contradiction.
If $y$ has all its neighbors in $D$, by removing the edge $xy$, we obtain a graph with two connected components and we get the same contradiction as before. Thus, $y$ has at least a neighbor in $V(G)-(C\cap D)$, say $w$. In this case, by removing the edge $yw$ and by the similar argument to that of the previous case we obtain a contradiction.\end{proof}

\bigskip

Let $y$ be the neighbor of $x$ in $V(G)-C$ such that $deg_{G}(y)=2$ and let $w$ the other neighbor of $y$ which is different from $x$.

\begin{claim} \label{cl3}
Every vertex $w\not\in C$ of distance $2$ from $x$ is a support vertex.
\end{claim}

\begin{proof} Assume that $w$ is not a support vertex. As $x\in D, y\not\in D$ then $w\in D$. We delete then the edge $xy$, the resulting graph has two connected components $G_{0}$ which contains a cycle $C$ and the tree $T'$ which contains $w$. $G_{0}$ is a unicycle graph of order $n(G_{0})=n_{0}$ with $l(G_{0})=l_{0}, s(G_{0})=s_{0}$. $T'$ has order $n(T')=n'$ with $l(T')=l', s(T')=s'$. The sets $D\cap V(G_{0}), D\cap V(T')$ are both a \emph{GOA} of $G_{0}$ and $T'$ respectively. We have $n=n_{0}+n', l(G)=l_{0}+l'-1$ and $s(G)=s_{0}+s'-1$. We get by the Theorems \ref{thm1}, \ref{thm2}
\begin{align*}
\gamma_{o}(G)  &  \geq \gamma_{o}(G_{0})+\gamma_{o}(T')\\
&  \geq \frac{n_{0}-l_{0}+s_{0}}{3}+\frac{n'-l'+s'+1}{3}\\
&  =\frac{n-l(G)+s(G)+1}{3}.
\end{align*}
Which is a contradiction.\end{proof}

\bigskip

Now, if $x$ has all its neighbors in $V(G)-C$ of degree $2$. Let $k=deg_{G}(x)-2$ and let $\{y_{i}, i=1,\ldots, k\}$ be the set of these neighbors. For $i=1,\ldots,k$, let $w_{i}$ the other neighbor of $y_i$ which is different from $x$. We remove then from $G$, the set $\{y_{i}w_{i}, i=1,\ldots, k\}$. We get the components $\cup_{i=1}^{k}T_{i}, G_{0}$, where $T_{i}$ is a tree which contains $w_{i}$, of order $n_{i}$ with $l(T_{i})=l_{i}, s(T_{i})=s_{i}$ and $G_{0}$ the component which contains $C$ which is a unicycle graph of order $n_{0}$ with $l(G_{0})=l_{0}, s(G_{0})=s_{0}$. Set $T'= \cup_{i=1}^{k}T_{i}$. Clearly, $n(T')=n'=\sum_{i=1}^{k}n_{i}, l(T')=l'=\sum_{i=1}^{k}l_{i}$ and $s(T')=s'=\sum_{i=1}^{k}s_{i}$. By the Claims \ref{cl2}, \ref{cl3}, as $y_i$ has degree two and $w_i$ is a support vertex for $i=1,\ldots,k$, it follows that $l(G)=l_{0}+l'-k, s(G)=s_{0}+s'-1$ and $n=n_{0}+n'$. The sets $D\cap V(G_{0}), D\cap V(T_{i})$ for $i=1,\ldots,k$ are a \emph{GOA} of $G_{0}$ and $T_{i}$ respectively. By the Theorems \ref{thm1}, \ref{thm2}, we obtain
\begin{align*}
\gamma_{o}(G)  &  \geq \gamma_{o}(G_{0})+\gamma_{o}(T')\\
&  \geq \frac{n_{0}-l_{0}+s_{0}}{3}+\frac{n'-l'+s'+k}{3}\\
&  =\frac{n-l(G)-k+s(G)+1+k}{3}\\
&  =\frac{n-l(G)+s(G)+1}{3}\\
\end{align*}
Which is a contradiction, so there exists a neighbor of $x_{0}$ of degree one. Thus, $x_{0}$ is a support vertex.\end{proof}

\bigskip

Next, we are interested in characterizing all unicycle graphs that attain the
bound in Theorem \ref{thm2}. For this purpose, we introduce the family $\mathcal{G}$
of unicycle graphs that can be obtained from a sequence $G_{1},G_{2},..,G_{k}, (k\geq1)$
of unicycle graphs, where $G=G_{k}$ and $G_{1}$ is the graph obtained from an even cycle
of order $k$ with vertices $x_{1},x_{2},...,x_{k}$ in this order by adding a
$\frac{k}{2}$ edges such that exactly one of them is incident to $x_{2i-1}$ for each $i$
in $\{1,..,\frac{k}{2}\},$ and if $k\geq2,$ $G_{i+1}$ is obtained recursively from $G_{i}$
by one of the two operations defined below.

\begin{itemize}
\item Operation O$1$: Attach a vertex by joining it by an edge to any support vertex of
$G_{i}$.

\item Operation O$2$: Attach a path $P_{3}=abc$ by joining $a$ by an edge to any support
vertex of $G_{i}.$
\end{itemize}

\bigskip 
Before to prove that the family $\mathcal{G}$ contains all unicycles graph attaining the lower bound of Theorem \ref{thm2}, we need to prove that the following family $\mathcal{G}_{0}$
which is included in $\mathcal{G}$ reaches the bound of Theorem \ref{thm2}.

\bigskip

Let $\mathcal{G}_{0}$ be a family of graphs that can be obtained from $r\geq1$
disjoint stars by first adding $r-1$ edges so that they are incident only with
centers of the stars and then join one center vertex of one star by an edge $u$
to one support vertex of $G_{1}$ and then subdividing $u$ and the $r-1$ edges that connect
the centers exactly once (see Figure \ref{fig}). It is clear to see that every graph in $\mathcal{G}_{0}$ is in $\mathcal{G}$.

\begin{figure}[htbp]
\begin{center}

\scalebox{1.5}
{
\begin{pspicture}(0,-1.16)(6.38,1.16)
\psdots[dotsize=0.12](5.92,0.69)
\psdots[dotsize=0.12](6.3,0.29)
\psdots[dotsize=0.12](6.1,0.29)
\psdots[dotsize=0.12](5.52,0.29)
\psline[linewidth=0.02cm](5.92,0.69)(6.3,0.29)
\psline[linewidth=0.02cm](5.9,0.71)(6.08,0.31)
\psline[linewidth=0.02cm](5.92,0.69)(5.5,0.27)
\psline[linewidth=0.02cm,linestyle=dashed,dash=0.16cm 0.16cm](5.48,0.29)(6.12,0.29)
\psdots[dotsize=0.12](3.38,0.69)
\psdots[dotsize=0.12](3.76,0.29)
\psdots[dotsize=0.12](3.56,0.29)
\psdots[dotsize=0.12](2.98,0.29)
\psline[linewidth=0.02cm](3.38,0.69)(3.76,0.29)
\psline[linewidth=0.02cm](3.36,0.71)(3.54,0.31)
\psline[linewidth=0.02cm](3.38,0.69)(2.96,0.27)
\psline[linewidth=0.02cm,linestyle=dashed,dash=0.16cm 0.16cm](2.94,0.29)(3.58,0.29)
\psdots[dotsize=0.12](2.32,0.69)
\psdots[dotsize=0.12](2.7,0.29)
\psdots[dotsize=0.12](2.5,0.29)
\psdots[dotsize=0.12](1.92,0.29)
\psline[linewidth=0.02cm](2.32,0.69)(2.7,0.29)
\psline[linewidth=0.02cm](2.3,0.71)(2.48,0.31)
\psline[linewidth=0.02cm](2.32,0.69)(1.9,0.27)
\psline[linewidth=0.02cm,linestyle=dashed,dash=0.16cm 0.16cm](1.88,0.29)(2.52,0.29)
\psdots[dotsize=0.12](2.84,0.71)
\psdots[dotsize=0.12](5.52,0.69)
\psline[linewidth=0.02cm,linestyle=dashed,dash=0.16cm 0.16cm](3.4,0.69)(5.5,0.69)
\psline[linewidth=0.02cm](2.32,0.69)(3.38,0.69)
\psline[linewidth=0.02cm](5.54,0.69)(5.96,0.69)
\psline[linewidth=0.02cm](1.1,0.69)(2.32,0.69)
\psdots[dotsize=0.12](1.1,0.69)
\psdots[dotsize=0.12](0.72,1.09)
\psdots[dotsize=0.12](0.3,0.69)
\psline[linewidth=0.02cm](0.72,1.11)(1.1,0.67)
\psline[linewidth=0.02cm](0.72,1.09)(0.3,0.71)
\psdots[dotsize=0.12](0.3,-0.69)
\psdots[dotsize=0.12](1.1,-0.69)
\psdots[dotsize=0.12](1.68,0.69)
\psdots[dotsize=0.12](0.7,-1.09)
\psline[linewidth=0.02cm](1.32,1.09)(1.12,0.71)
\psline[linewidth=0.02cm](0.04,1.09)(0.32,0.69)
\psdots[dotsize=0.12](0.06,1.07)
\psdots[dotsize=0.12](1.3,1.07)
\psline[linewidth=0.02cm](1.08,-0.69)(0.72,-1.09)
\psline[linewidth=0.02cm](0.32,-0.69)(0.7,-1.07)
\psline[linewidth=0.02cm](1.12,-0.69)(1.3,-1.07)
\psline[linewidth=0.02cm](0.28,-0.69)(0.06,-1.05)
\psdots[dotsize=0.12](0.06,-1.07)
\psdots[dotsize=0.12](1.28,-1.07)
\psdots[dotsize=0.12](0.3,0.45)
\psdots[dotsize=0.12](1.1,0.47)
\psdots[dotsize=0.12](0.3,-0.49)
\psdots[dotsize=0.12](1.1,-0.49)
\psline[linewidth=0.02cm](0.3,0.71)(0.3,0.47)
\psline[linewidth=0.02cm](1.1,0.69)(1.1,0.47)
\psline[linewidth=0.02cm](0.3,-0.49)(0.3,-0.69)
\psline[linewidth=0.02cm](1.1,-0.47)(1.1,-0.63)
\psline[linewidth=0.02cm,linestyle=dashed,dash=0.16cm 0.16cm](0.3,0.47)(0.3,-0.49)
\psline[linewidth=0.02cm,linestyle=dashed,dash=0.16cm 0.16cm](1.1,0.47)(1.1,-0.45)
\usefont{T1}{ptm}{m}{n}
\rput(2.3601563,0.875){\tiny $x_1$}
\usefont{T1}{ptm}{m}{n}
\rput(5.9559374,0.875){\tiny $x_r$}
\end{pspicture}
}

\end{center}
\caption{The family $\mathcal{G}_{0}$.} \label{fig}
\end{figure}
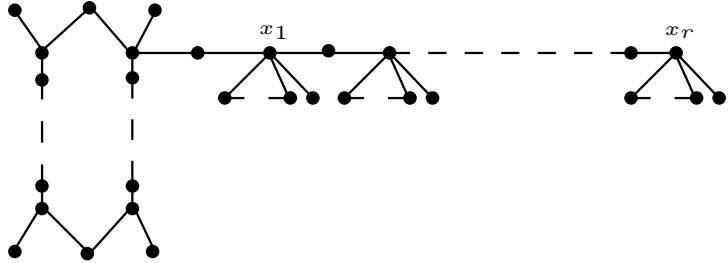

\begin{proposition}
\label{prop0}Let $G\in\mathcal{G}_{0}$ be a graph of order $n$ with $\l(G)$
leaves and $s(G)$ support vertices. Then $\gamma_{o}(G)=\frac{n-l(G)+s(G)}{3}.$
\end{proposition}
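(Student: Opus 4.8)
The plan is to combine the lower bound already furnished by Theorem \ref{thm2} with a matching upper bound obtained from an explicit global offensive alliance. Since Theorem \ref{thm2} gives $\gamma_o(G)\ge\frac{n-l(G)+s(G)}{3}$ for every connected unicycle graph, it is enough to exhibit a GOA $S$ of $G$ with $|S|=\frac{n-l(G)+s(G)}{3}$.

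First I would fix notation for $G\in\mathcal{G}_0$. Write the even cycle $G_1$ as $C_k$ on $x_1,\dots,x_k$, with the $k/2$ pendant edges of $G_1$ attached at the odd-indexed vertices $x_1,x_3,\dots,x_{k-1}$; let $c_1,\dots,c_r$ be the centers of the $r$ stars, each $c_i$ carrying $t_i\ge 1$ leaves, and recall that the $r-1$ center-joining edges together with the edge $u$ from $c_1$ to a support vertex $p=x_{2i_0-1}$ of $G_1$ are each subdivided once, introducing $r$ subdivision vertices. Tabulating the parameters gives $n=\frac{3k}{2}+2r+\sum_i t_i$, $l(G)=\frac{k}{2}+\sum_i t_i$ (cycle pendants and star leaves), and $s(G)=\frac{k}{2}+r$ (odd cycle vertices and centers). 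A direct substitution yields $n=l(G)+2s(G)$, equivalently $\frac{n-l(G)+s(G)}{3}=s(G)=\frac{k}{2}+r$.

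The core of the proof is to take $S$ to be precisely the set of all support vertices, $S=\{x_1,x_3,\dots,x_{k-1}\}\cup\{c_1,\dots,c_r\}$, so that $|S|=s(G)=\frac{k}{2}+r$, and to check that $S$ is a GOA. The structural fact to record is that every vertex of $V-S$ is of exactly one of three harmless types. Each leaf (a cycle pendant or a star leaf) has its unique support neighbor in $S$, so $|N[v]\cap S|=1=|N[v]-S|$. Each even cycle vertex $x_{2i}$ has degree two with both cycle neighbors $x_{2i-1},x_{2i+1}$ in $S$. Each subdivision vertex has degree two, and both of its neighbors, either two centers or the center $c_1$ and the cycle support vertex $p$, lie in $S$. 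In the last two cases $|N[v]\cap S|=2\ge 1=|N[v]-S|$. Hence $S$ dominates $G$ and meets the offensive condition at every vertex outside it, so $S$ is a GOA and $\gamma_o(G)\le|S|=\frac{n-l(G)+s(G)}{3}$.

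Putting this upper bound together with Theorem \ref{thm2} gives the claimed equality. I expect no substantive obstacle: the argument is careful bookkeeping, and the single point deserving attention is the verification that in every member of $\mathcal{G}_0$ each vertex lying off the cycle and outside $S$ is a degree-two subdivision vertex whose two neighbors are both support vertices. This is exactly what forces the offensive inequality, and it is guaranteed by the defining requirement that all the connecting edges, as well as $u$, be subdivided; the vertex $p$ needs only the remark that it belongs to $S$ as an odd cycle support vertex, with its extra neighbor handled by the subdivision-vertex case.
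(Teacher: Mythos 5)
Your proposal is correct, but it takes a genuinely different route from the paper. The paper proves the proposition by deleting one edge $xy$ (with $x$ a support vertex of $C$ and $y$ the adjacent subdivision vertex of the edge $u$), so that $G$ splits into a tree $T\in\mathcal{F}$ and the remainder $G\setminus T$; it then asserts $\gamma_{o}(G)=\gamma_{o}(T)+\gamma_{o}(G\setminus T)=\gamma_{o}(T)+S_{c}$ and finishes by substituting the equality case of Theorem \ref{thm1} for $\gamma_{o}(T)$ and doing the parameter bookkeeping. You instead prove the matching upper bound directly: you tabulate $n=\tfrac{3k}{2}+2r+\sum_i t_i$, $l(G)=\tfrac{k}{2}+\sum_i t_i$, $s(G)=\tfrac{k}{2}+r$, observe that $n=l(G)+2s(G)$ so the claimed value equals $s(G)$, and verify that the set of all support vertices is a GOA (every vertex outside it is a leaf, an even-indexed cycle vertex, or a subdivision vertex, and in each case the closed-neighborhood inequality holds); the lower bound then comes from Theorem \ref{thm2}. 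Your checks are all correct, and your route has two advantages: it does not rely on the additivity $\gamma_{o}(G)=\gamma_{o}(T)+\gamma_{o}(G\setminus T)$ and the evaluation $\gamma_{o}(G\setminus T)=S_{c}$, which the paper leaves essentially unjustified (the cited Observation \ref{obs00} does not by itself give either fact), and it does not need the equality characterization of Theorem \ref{thm1} at all, only its inequality via Theorem \ref{thm2}. The paper's decomposition, on the other hand, is the template reused in the converse direction of the final characterization theorem, which is presumably why the authors chose it here.
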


\begin{proof}
Let $C$ be the unique even cycle of $G$ and let $L_{c},S_{c}$ be the number of
leaves and support vertices of $C$, respectively. Let $T$ and $G\diagdown T$
be the two connected components of $G-xy$ where $x$ is a support vertex of $C$
and $y\notin C$ is a neighbor of $x$ of degree $2$ in $G$. Clearly, $T$ is a tree
of order $n(T)$ that belongs to $\mathcal{F}.$ Therefore\ $n(T)=n-(L_{c}%
+2S_{c}),$ $l(T)=\ell(G)-L_{c}+1$ and $s(T)=s(G)-S_{c},$ Then by Observation
\ref{obs00},
\[
\gamma_{o}(G)=\gamma_{o}(T)+\gamma_{o}(G\diagdown T)=\gamma_{o}(T)+S_{c}.
\]
As $T$ is a tree of $%
\mathcal{F}%
,$ it follows that $\gamma_{o}(T)=\frac{n(T)-l(T)+s(T)+1}{3}.$ Hence
\begin{align*}
\gamma_{o}(G)  &  =\dfrac{n(T)-l(T)+s(T)+1}{3}+S_{c}\\
&  =\dfrac{n-(L_{c}+2S_{c})-\left(  \ell-L_{c}+1\right)  +1+s-S_{c}+3S_{c}}%
{3}\\
&  =\frac{n-\ell(G)+s(G)}{3}.\end{align*}\end{proof}

\begin{theorem}
Let $G$ be a connected unicycle graph of order $n$ with $l(G)$ leaves and $s(G)$
support vertices. Then $\gamma_{o}(G)=\frac{n-l(G)+s(G)}{3}$ if and only if $G\in\mathcal{G}.$
\end{theorem}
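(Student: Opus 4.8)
The plan is to prove both implications, using the structural results of Theorems \ref{them3}, \ref{thm3} and \ref{thm4} to control the cycle, and Theorem \ref{thm2} together with Theorem \ref{thm1} to control the arithmetic of $n-l+s$ under local modifications.

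For the direction $G\in\mathcal{G}\Rightarrow\gamma_{o}(G)=\frac{n-l(G)+s(G)}{3}$, I would induct on the number of operations used to build $G$ from $G_{1}$. For the base case I would check directly that the set $D_{0}=\{x_{1},x_{3},\dots,x_{k-1}\}$ of the $k/2$ support vertices of $G_{1}$ is a \emph{GOA}: each leaf and each even-indexed cycle vertex has both relevant neighbors in $D_{0}$, so the defining inequality holds, giving $\gamma_{o}(G_{1})\le k/2$; since $n=\frac{3k}{2}$ and $l=s=\frac{k}{2}$, Theorem \ref{thm2} forces equality. For the inductive step I would track the triple $(n,l,s)$ and a witnessing \emph{GOA}: operation O$1$ leaves $\frac{n-l+s}{3}$ unchanged and an optimal set for $G_{i}$ remains a \emph{GOA} of $G_{i+1}$, whereas O$2$ increases $\frac{n-l+s}{3}$ by exactly $1$ and adjoining the new support vertex to an optimal set of $G_{i}$ yields a \emph{GOA} of $G_{i+1}$; in both cases Theorem \ref{thm2} supplies the matching lower bound. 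Proposition \ref{prop0} is exactly the computation certifying extremality for the $\mathcal{G}_{0}$-configurations that arise in this way.

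For the converse I would argue by induction on $n$. Fix a $\gamma_{o}(G)$-set $D$ containing all support vertices (Observation \ref{obs00}). Theorems \ref{them3}, \ref{thm3} and \ref{thm4} then pin down the cycle completely: $C$ is even and its vertices alternate between support vertices lying in $D$ and degree-two vertices outside $D$; in particular no off-cycle edge is incident to a degree-two cycle vertex, so every hanging tree is rooted at a cycle support. If $G=G_{1}$ we are done. Otherwise I would examine a leaf $\ell$ at maximum distance from $C$ with support $v$. After first stripping, via reverse-O$1$, any support carrying more than one leaf (deleting one leaf $\ell\notin D$ sends $(n,l,s)$ to $(n-1,l-1,s)$ and keeps $D$ a \emph{GOA}, hence keeps the graph extremal by Theorem \ref{thm2}), I would exhibit a pendant $P_{3}$, namely $u\text{--}v\text{--}\ell$ with $\deg(v)=2$ and $u$ of degree $2$ attached to a support, and delete it by reverse-O$2$. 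Removing $\{u,v,\ell\}$ sends $(n,l,s)$ to $(n-3,l-1,s-1)$, and $D\setminus\{v\}$ is a \emph{GOA} of the smaller graph $G'$, so $\gamma_{o}(G')\le\gamma_{o}(G)-1=\frac{n'-l'+s'}{3}$; with Theorem \ref{thm2} this shows $G'$ is again an extremal unicycle graph. By induction $G'\in\mathcal{G}$, and since $G$ is recovered from $G'$ by O$1$ or O$2$, we conclude $G\in\mathcal{G}$.

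The step I expect to be the main obstacle is verifying, in the converse, that when $G\neq G_{1}$ the farthest-leaf configuration really is a reducible O$1$ or O$2$ structure — concretely, that the vertex $u$ adjacent to the deepest support $v$ has degree two rather than branching further. This cannot be read off from the cycle lemmas alone and must be forced from the equality $\gamma_{o}(G)=\frac{n-l+s}{3}$ by the same detach-and-count technique used in Theorem \ref{thm4}: deleting the appropriate edge splits $G$ into a unicycle component and a tree component, and applying Theorem \ref{thm2} to the former and Theorem \ref{thm1} (with its extremal family $\mathcal{F}$, linked to $\mathcal{G}_{0}$ through Proposition \ref{prop0}) to the latter yields a strict inequality unless the degrees are as required. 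Making this case analysis exhaustive — covering every way a branch could hang off $v$ or off $u$ — is the delicate part of the argument.
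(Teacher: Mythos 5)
Your forward direction is complete and correct, and it takes a genuinely different route from the paper: you induct on the number of operations, observing that O$1$ preserves and O$2$ increases by exactly one both a witnessing \emph{GOA} and the quantity $(n-l+s)/3$, with Theorem \ref{thm2} supplying the matching lower bound at each step. The paper instead detaches all $\frac{k}{2}$ pendant subtrees at the cycle supports simultaneously, reduces to a member of $\mathcal{G}_{0}$ plus trees of $\mathcal{F}$, and adds up the formulas of Proposition \ref{prop0} and Theorem \ref{thm1}. Your version is arguably cleaner and needs only Observation \ref{obs00} to guarantee the attachment vertex lies in the optimal set being extended.

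The converse contains a genuine gap, and it sits exactly where you flagged it. Your two reductions are arithmetically sound once the configuration is in hand, but the assertion that a deepest leaf $\ell$ with support $v$ terminates a pendant path $u$-$v$-$\ell$ with $\deg(u)=2$ and $u$ attached to a \emph{support} vertex $z$ is the entire content of the converse, and "the same detach-and-count technique" does not deliver all of it. Some pieces do fall to it: detaching the $P_{2}$ on $\{v,\ell\}$ from a support neighbor yields $\gamma_{o}(G)\geq\frac{n-l(G)+s(G)+1}{3}$, which rules out adjacent supports and forces $\deg(u)=2$. But the crucial claim that $z$ (the neighbor of $u$ toward the cycle) is a support fails under the obvious cut: if $z$ is not a support, deleting $zu$ detaches a $P_{3}$ with $\gamma_{o}=1=\frac{3-2+1+1}{3}$, and since $u$ becomes a new leaf of that $P_{3}$ while no compensating new support appears on the tree side, the bookkeeping returns only $\gamma_{o}(G)\geq\frac{n-l(G)+s(G)}{3}$ --- the bound you already have, with no contradiction. (The paper's Claim \ref{cl3} escapes this because its cut is placed so that the detached tree gains a new leaf \emph{and} a new support simultaneously; moreover that claim is proved only for a cycle vertex $x$ hypothesized not to be a support, so it cannot be quoted for your off-cycle $z$ at depth $\geq2$.) Until this is closed --- for instance by first proving $\deg(z)=2$ and then cutting one edge higher so the detached component is a $P_{4}$, for which $\gamma_{o}(P_{4})=2>\frac{4-2+2+1}{3}$ gives the needed strict inequality --- your induction can stall on a pendant $P_{4}$ hanging at a support, which is neither a reverse-O$1$ nor a reverse-O$2$ configuration. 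Note finally that the paper's own converse is organized quite differently (a single split at one cycle support into a unicyclic piece asserted to be $G_{1}$ plus O$1$'s and a tree forced into $\mathcal{F}$), so none of its intermediate claims can simply be imported to fill this hole; the deepest-branch analysis has to be written out.
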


\begin{proof}
Let $G\in\mathcal{G}$ and $C_{k}$ be the unique even cycle of $G$ of order
$k$. Remove from $G$, $\frac{k}{2}$ edges $x_{i}y_{i}$ ($1\leq i\leq\frac
{k}{2})$ of $G$ such $x_{i}$ is a support vertex of $C_{k},$ $y_{i}\notin
C_{k}$ is a neighbor of $x_{i}$ of degree $2$ in $G$.

Let $T_{1},T_{2},..,$ $T_{\frac{k}{2}}$ and $H_{1}$ be the connected components of the resulting graph. $H_{1}$ is the
component that contains the cycle $C_{k}$ and $T_{i}$ is a tree. Observe that for each
$i\in\{1,...,\frac{k}{2}\},$ $T_{i}$ is in $\mathcal{F}$. Thus by Theorem
\ref{thm1},
\begin{equation}
\gamma_{o}(T_{i})=\frac{n(T_{i})-l(T_{i})+s(T_{i})+1}{3}\text{ for each }%
i\in\{1,...,\frac{k}{2}\}. \label{cond0}%
\end{equation}
Set $G_{0}=$ $G\diagdown\cup_{i=2}^{i=\frac{k}{2}}T_{i}$. Then $G_{0}$ is the
subgraph induced by the vertices of $H_{1}$ and $T_{1}$. Clearly, $G_{0}%
\in\mathcal{G}_{0}.$ Therefore\ by Observation \ref{obs00}, equality (\ref{cond0}),
and Proposition \ref{prop0}, we get
\begin{align*}
\gamma_{o}(G)  &  =\gamma_{o}(G_{0})+\sum_{i=2}^{i=\frac{k}{2}}\gamma
_{o}(T_{i})\\
&  =\frac{n(G_{0})-l(G_{0})+s(G_{0})}{3}+\sum_{i=2}^{\frac{k}{2}}\frac
{n(T_{i})-l(T_{i})+s(T_{i})+1}{3}\\
&  =\frac{n(G_{0})-l(G_{0})+s(G_{0})}{3}+\sum_{i=2}^{\frac{k}{2}}\frac
{n(T_{i})-l(T_{i})+s(T_{i})}{3}+\frac{\frac{k}{2}-1}{3}%
\end{align*}

As $n=n(G_{0})+\sum_{i=2}^{i=\frac{k}{2}}n(T_{i}),$ $s(G)=s(G_{0}%
)+\sum_{i=2}^{i=\frac{k}{2}}s(T_{i})$ and $l(G)=l(G_{0})+\sum_{i=2}%
^{i=\frac{k}{2}}l(T_{i})-(\frac{k}{2}-1),$ it follows that
\[
\gamma_{o}(G)=\frac{n-l(G)+s(G)}{3}.
\]

\bigskip

Conversely, Let $D$ be a $\gamma_{o}(G)$-set and let $x$ be a vertex in $C\cap D$. By Theorem \ref{thm4}, $x$ is a support vertex. Let $y$ be a neighbor of $x$ in $V(G)-C$ of degree $2$, such a vertex exists by Claim \ref{cl2}. Suppose that $w$ is the other neighbor of $y$ in $V(G)-C$. It follows by Claim \ref{cl3}, that $w$ is a support vertex. Let us remove from $G$ the edge $xy$, the resulting graph has two connected components graphs $G^{\prime\prime}$ which is a unicycle graph containing $x$, and $T'$ which a tree that contains $y$.
$G^{\prime\prime}$ is a unicycle graph with cycle $C$ which is an even cycle by Theorem \ref{them3}. Let $S_{c}, L_{c}$ be the number of support vertices and leaves of $C$, respectively.
As every vertex of $C\cap D$ is a support vertex (by Theorem \ref{thm4}) and any vertex of $C-D$ has degree two by the Theorem \ref{thm3}, thus if $S_{c}=L_{c}$, then $G^{\prime\prime}=G_{1}\in \mathcal{G}$, otherwise, $G^{\prime\prime}$ can be obtained from $G_{1}$ by the operation $O1$ and so $G^{\prime\prime}\in \mathcal{G}$. Let $G^{\prime\prime}$ be of order $n^{\prime\prime}$ with $l(G^{\prime\prime})=l^{\prime\prime}, s(G^{\prime\prime})=s^{\prime\prime}$ and let $T'$ be of order $n'$ with $l(T')=l', s(T')=s'$. As $G^{\prime\prime}\in \mathcal{G}$, it follows then from the necessary condition that $\gamma_{o}(G^{\prime\prime})=\frac{n^{\prime\prime}-l^{\prime\prime}+s^{\prime\prime}}{3}$. The sets $D\cap V(G^{\prime\prime})$ and $D\cap V(T')$ are both a \emph{GOA} of $G^{\prime\prime}$ and $T'$ respectively, then
\begin{equation}\label{eq1}
\gamma_{o}(G)\geq \gamma_{o}(G^{\prime\prime})+\gamma_{o}(T').
\end{equation}

On the other hand, as both $x,w$ are a support vertices and $y$ is of degree two, then $n=n^{\prime\prime}+n', l(G)=l^{\prime\prime}+l'-1, s(G)=s^{\prime\prime}+s'$ and
any $\gamma_{o}(G^{\prime\prime})$-set $\cup \gamma_{o}(T')$-set is a \emph{GOA} of $G$, and so,

\begin{equation}\label{eq2}
\gamma_{o}(G)\leq \gamma_{o}(G^{\prime\prime})+\gamma_{o}(T').
\end{equation}

By the inequalities (\ref{eq1}), (\ref{eq2}), $\gamma_{o}(G) = \gamma_{o}(G^{\prime\prime})+\gamma_{o}(T')$. So

\begin{align*}
\gamma_{o}(T')  &  =\gamma_{o}(G)- \gamma_{o}(G^{\prime\prime})\\
&  =\frac{n-l(G)+s(G)}{3}-\frac{n^{\prime\prime}-l^{\prime\prime}+s^{\prime\prime}}{3}\\
&  =\frac{(n-n^{\prime\prime})-(l(G)-l^{\prime\prime})+(s(G)-s^{\prime\prime})}{3}
&  =\frac{n'-(l'-1)+s'}{3}=\frac{n'-l'+s'+1}{3}.
\end{align*}

Thus, $T'\in \mathcal{F}$. So, if $G^{\prime\prime}= \mathcal{G}_{1}$, then $G= \mathcal{G}_{0}\in \mathcal{G}$, else, $G$ can be obtained from $\mathcal{G}_{0}$ by the Operation $O1$ and then $G\in \mathcal{F}$ which completes the proof.\end{proof}

\bigskip

\section*{Acknowledgements}
The authors thank Professor M. Chellali for his helpful suggestions.

\bigskip

\end{document}